\begin{document}
\title{Nahm's conjecture and Y-systems}
\author{Chul-hee Lee}
\address{Max-Planck-Institut f\"ur Mathematik, Vivatsgasse 7, 53111 Bonn, Germany}
\email{chlee@mpim-bonn.mpg.de}
\date{Sep 2011, last modified on \today}
\maketitle

\begin{abstract}
\noindent
Nahm's conjecture relates $q$-hypergeometric modular functions to torsion elements in the Bloch group. An interesting class of such functions can be (conjecturally) obtained from a pair $(X,X')$ of diagrams, each of which is either a Dynkin diagram of type $ADE$ or a diagram of type $T$. Using properties of Y-systems, we prove that for a matrix of the form $A=\mathcal{C}(X)\otimes \mathcal{C}(X')^{-1}$ where $\mathcal{C}(X)$ and $\mathcal{C}(X')$ are the corresponding Cartan matrices, every solution of the equation $\mathbf{x}=(1-\mathbf{x})^A$ gives rise to a torsion element of the Bloch group.
\end{abstract}
\maketitle
\section{Introduction}
In \cite{Nahm}, Nahm considered a question of when an $r$-fold $q$-hypergeometric series is modular and made a conjecture relating this question to algebraic K-theory, motivated by integrable perturbations of rational conformal field theories. 

\begin{definition}
Let $A$ be a positive definite symmetric $r\times r$ matrix, $B$ be a vector of length $r$, and $C$ a scalar, all three with rational entries. We consider an $r$-fold $q$-hypergeometric series
\begin{equation*}
f_{A,B,C}(z)=\sum_{n=(n_1,...,n_r)\in (\mathbb{Z}_{\geq 0})^r}\frac{q^{\frac{1}{2}n^tAn+B^tn+C}}{(q)_{n_1}\cdots(q)_{n_r}}
\end{equation*}
where $q=e^{2\pi i z}$ and $(q)_n=(1-q)(1-q^2)...(1-q^n)$. If $f_{A,B,C}$ is a modular function, then we call $(A,B,C)$ a modular triple and $A$ the matrix part of it.
\end{definition}

The most famous examples are the Rogers-Ramanujan identities:
$$\sum_{n=0}^\infty \frac {q^{n^2-1/60}} {(q)_n} = \frac {q^{-1/60}}{(q;q^5)_\infty (q^4; q^5)_\infty},$$
$$\sum_{n=0}^\infty \frac {q^{n^2+n+11/60}} {(q)_n} =\frac { q^{11/60}}{(q^2;q^5)_\infty 
(q^3; q^5)_\infty}$$
where $(a;x)_{\infty} = \prod_{k=0}^{\infty} (1-ax^k).$ In this case, we have modular triples $((2), (0),-1/60)$ and $((2), (1),11/60)$. 

As an attempt to characterize matrix parts of modular triples, one considers the asymptotic behavior of $f_{A,B,C}$ when $z$ approaches 0 and is led to a system of equations associated to the matrix $A=(a_{ij})$ given by
\begin{equation} \label{TBAeqn}
x_i=\prod_{j=1}^{r}(1-x_j)^{a_{ij}}, \quad (i=1,...,r).
\end{equation}
This is a system of $r$ equations of $r$ variables $x_1,\cdots, x_r$. When there is no confusion, we will denote this system of equations by $\mathbf{x}=(1-\mathbf{x})^A$.

For a solution $\mathbf{x}=(x_1,...,x_r)\in \mathbb{\overline{Q}}^r$ of $\mathbf{x}=(1-\mathbf{x})^A$, we consider a formal sum $\xi_{\mathbf{x}}=[x_1]+...+[x_r]$ in the group ring $\mathbb{Z}[{F}]$ of the number field $F=\mathbb{Q}(x_1,\cdots,x_r)$. It is an element of the Bloch group $\mathcal{B}(F)$ whose definition is given in Section \ref{BG}.

Nahm's conjecture is as follows : 
\begin{conjecture}
Let $A$ be a positive definite symmetric $r\times r$ matrix with rational entries. The following are equivalent: 
\begin{enumerate}
\item[(i)] For any solution $\mathbf{x}=(x_1,...,x_r)\in \mathbb{\overline{Q}}^r$ of (\ref{TBAeqn}), the element $\xi_{\mathbf{x}}$ is a torsion element of $\mathcal{B}({F})$.
\item[(ii)] There exists a modular triple $(A,B,C)$.
\end{enumerate}
\end{conjecture}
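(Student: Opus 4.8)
The plan is to route both implications through the dilogarithm, which is the common analytic object underlying the $K$-theoretic side (condition (i)) and the asymptotic side (condition (ii)). First I would fix the dictionary on the Bloch group. Writing $L(x)=\mathrm{Li}_2(x)+\tfrac12\log x\log(1-x)$ for the Rogers dilogarithm and $D$ for the Bloch--Wigner dilogarithm, I would invoke Suslin's identification of $\mathcal{B}(F)\otimes\mathbb{Q}$ with $K_3^{\mathrm{ind}}(F)\otimes\mathbb{Q}$ together with Borel's computation of the regulator: an element $\xi\in\mathcal{B}(F)$ is torsion if and only if $D\bigl(\sigma(\xi)\bigr)=0$ for every embedding $\sigma\colon F\hookrightarrow\mathbb{C}$, since the Borel regulator injects the free part of $\mathcal{B}(F)$ into $\mathbb{R}^{r_2}$. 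This converts condition (i) into the purely numerical assertion that the dilogarithm sum $\sum_i D\bigl(\sigma(x_i)\bigr)$ vanishes for every Galois conjugate of a solution $\mathbf{x}$ of $\mathbf{x}=(1-\mathbf{x})^A$.

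With that dictionary in hand, the implication (ii)$\Rightarrow$(i) is the approachable half. Assuming $(A,B,C)$ is a modular triple, I would analyze the behaviour of $f_{A,B,C}(z)$ as $z\to 0$ by the saddle-point method: the exponential of the summand is governed by a potential whose critical-point equations are precisely the system \ref{TBAeqn}, so each solution $\mathbf{x}$ contributes a saddle whose leading asymptotic exponent is a fixed rational multiple of $\tfrac{1}{z}\sum_i L(x_i)$. Modularity pins this exponent down: the transformation $z\mapsto -1/z$ forces the leading growth rate, namely the effective central charge $c_{\mathrm{eff}}=\tfrac{6}{\pi^2}\sum_i L(x_i)$, to be rational, so that $\sum_i L(x_i)\in\tfrac{\pi^2}{6}\mathbb{Q}$ for the distinguished solution with all $x_i\in(0,1)$. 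Tracking the same transformation behaviour at the remaining cusps, which selects the other branches of the solution set and hence the Galois conjugates of $\mathbf{x}$, should upgrade this rationality to the vanishing of $\sum_i D\bigl(\sigma(x_i)\bigr)$ for every $\sigma$, which by the dictionary is exactly the torsion of $\xi_{\mathbf{x}}$.

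The hard part, and where I expect the genuine obstruction to lie, is the reverse implication (i)$\Rightarrow$(ii). Torsion of $\xi_{\mathbf{x}}$ delivers the rationality of all the dilogarithm values, and hence of the candidate central charge and conformal weights; but this is only the necessary numerical datum extracted from the asymptotics. Producing an actual modular function $f_{A,B,C}$---choosing the linear term $B$ and the constant $C$ and then proving full modular invariance---requires analytic input that rationality of the regulator does not by itself supply, and no general construction is known. My expectation is therefore that (i)$\Rightarrow$(ii) cannot be settled in this generality by dilogarithm methods alone, and that the realistic target is to establish the equivalence on structured families where the modular functions are furnished externally. For a matrix of the form $A=\mathcal{C}(X)\otimes\mathcal{C}(X')^{-1}$ of $ADET$ type the relevant sums $f_{A,B,C}$ are the characters of rational conformal field theories, whose modularity is known and whose saddle-point data is controlled by the dilogarithm identities attached to the associated Y-system. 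Restricting to this case, both the torsion statement of (i) and the modularity statement of (ii) become consequences of the same Y-system periodicity and its constant sum rules, so that the two conditions meet at the functional identities of the Y-system---which is the concrete form of the equivalence I would aim to prove.
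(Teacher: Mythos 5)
There is a fundamental problem here: the statement you are proving is presented in the paper as a \emph{conjecture}, the paper neither proves it nor claims to, and in fact the paper explicitly records (citing Vlasenko and Zwegers \cite{ZV}) that counterexamples to this equivalence are known, so that the conjecture as stated is false and no proof strategy can succeed. What the paper actually establishes is a one-sided statement in a special family: Theorem \ref{thmmain} shows that for $A=\mathcal{C}(X)\otimes\mathcal{C}(X')^{-1}$ with $(X,X')$ a pair of $ADET$ diagrams, every solution of $\mathbf{x}=(1-\mathbf{x})^A$ gives a torsion element of $\mathcal{B}(F)$ --- that is, condition (i) holds for these matrices --- and this is proved with no appeal to modularity at all: the solution is transported to a constant Y-system (Proposition \ref{PROPY}), and torsion follows from Keller's periodicity \cite{Keller}, Nakanishi's constancy condition \cite{Nakanishi}, and the monomial sign structure of the $Y_{\mathbf{i}}(u)$, fed into the rigidity statement of Proposition \ref{thmgon}.

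Beyond this mismatch of target, the concrete gap in your sketch sits exactly where you call the argument ``approachable.'' The saddle-point analysis of $f_{A,B,C}(z)$ as $z\to 0$ controls only the distinguished solution of \ref{TBAeqn} with all $x_i\in(0,1)$, and for that solution $D(x_i)=0$ trivially since the $x_i$ are real; modularity then constrains the \emph{Rogers} dilogarithm sum $\sum_i L(x_i)$ to lie in $\frac{\pi^2}{6}\mathbb{Q}$, which is a statement about real parts and gives no control over the Bloch--Wigner values $D(\sigma(x_i))$ at complex embeddings of the \emph{other} solutions. Your phrase that tracking the cusps ``should upgrade this rationality to the vanishing of $\sum_i D(\sigma(x_i))$ for every $\sigma$'' is precisely the step with no argument behind it, and it is exactly the step that fails: the counterexamples of \cite{ZV} exhibit matrices admitting modular triples for which some solution gives a non-torsion element, so (ii) does not imply (i). Your fallback claim for the $ADET$ families overclaims as well: there, the paper proves only the torsion half, while modularity of the associated $q$-series for general $ADET$ pairs remains conjectural (the paper says as much in the introduction), so no equivalence is available even in the structured case. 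The salvageable content of your proposal is the dictionary in your first paragraph (torsion detected by vanishing of $D$ under all embeddings, which the paper quotes from \cite{Zagier2}) together with the closing observation that Y-system identities govern the dilogarithm sums; pushed through honestly, that yields the paper's Corollary \ref{torsion2} and Theorem \ref{thmmain}, not the conjectured equivalence.
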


In addition to that, we expect that for such matrices $A$, there exist modular triples $(A,B_s,C_s), s\in S$ indexed by a finite set $S$ and $(f_{A,B_s,C_s})_{s\in S}$ spans a vector space which is invariant under the whole group $\rm{SL}(2,\mathbb{Z})$ of the modular transformations.

This conjecture is discussed in detail in Nahm's paper \cite{Nahm} from the viewpoint of conformal field theory and in Zagier's \cite{Zagier} from that of number theory. Zwegers and Vlasenko found counterexamples to the above conjecture in \cite{ZV} : there exists a modular triple $(A,B,C)$ for which not all solutions of (\ref{TBAeqn}) give torsion elements. Thus the conjecture must be modified and reformulated. We still expect that for a given modular triple, there exists a solution giving torsion elements in $\mathcal{B}({F})$ and (i)$\Rightarrow$(ii) is still conjectural.

It has been long known (and conjectured) that there is an interesting class of modular triples whose matrix part is given by the Kronecker product of a pair of certain matrices from Lie theory. To be precise, let us consider a matrix of the form $A=\mathcal{C}(X)\otimes \mathcal{C}(X')^{-1}$ where each of $X$ and $X'$ is either a Dynkin diagram of type $ADE$ or a diagram of type $T$ ($ADET$ diagram\footnote{Although the diagrams of type $T$ are not considered in standard Lie theory, they have been commonly used in literature related to our topic under discussion. See \cite{MR1216231} and \cite[Section 4]{Nahm} for example.} for short) and $\mathcal{C}(X)$ and $\mathcal{C}(X')$ their Cartan matrices. We denote their index sets by $I$ and $I'$ and let $\mathbf{I}=I\times I'$. The diagram of type $T_n$ can be obtained by folding the diagram of type $A_{2n}$ in the middle and $\mathcal{C}(T_n)$ is the inverse of the integral matrix $\left(\min(i,j)\right)_{1\leq i,j \leq n}$ up to a permutation of the index set $\{1,\cdots, n\}$. See Section \ref{YS2} for more about foldings of simply-laced Dynkin diagrams.

For such matrices $A$, many modular triples have been found. In the case of the Rogers-Ramanujan identities, the matrix part of the modular triples is $(2)=\mathcal{C}(A_1)\otimes \mathcal{C}(T_1)^{-1}$. The Andrews-Gordon identities are well-known generalizations of the Rogers-Ramanujan identities and they give examples of modular triples with matrix parts of the form $A=\mathcal{C}(A_1)\otimes \mathcal{C}(T_n)^{-1}$. See \cite{MR2869652} for a detailed discussion of more examples.

In this paper, we give a proof of the following theorem.
\begin{theorem}  \label{thmmain}
Let $A=\mathcal{C}(X)\otimes \mathcal{C}(X')^{-1}$ where each of $X$ and $X'$ is either a Dynkin diagram of type $ADE$ or a diagram of type $T$. For every solution $\mathbf{x}=(x_{\mathbf{i}})_{\mathbf{i}\in \mathbf{I}}$ of the equation $\mathbf{x}=(1-\mathbf{x})^A$, $\xi_{\mathbf{x}}=\sum_{\mathbf{i}\in \mathbf{I}} [x_{\mathbf{i}}]$ is a torsion element of the Bloch group $\mathcal{B}({F})$ where $F$ is the number field generated by $\mathbf{x}$. 
\end{theorem}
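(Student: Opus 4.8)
The plan is to check the two conditions defining a torsion element of $\mathcal{B}(F)$ in turn: that $\xi_{\mathbf{x}}$ lies in $\mathcal{B}(F)$ at all, i.e. that its boundary $\delta\xi_{\mathbf{x}}=\sum_{\mathbf{i}}x_{\mathbf{i}}\wedge(1-x_{\mathbf{i}})$ vanishes in $\bigwedge^{2}F^{\times}$, and that $\xi_{\mathbf{x}}$ is then annihilated by the dilogarithm regulators. The first condition is the easy one and follows from symmetry of $A$: since $A=\mathcal{C}(X)\otimes\mathcal{C}(X')^{-1}$ is the Kronecker product of the symmetric matrices $\mathcal{C}(X)$ and $\mathcal{C}(X')^{-1}$, it is itself symmetric, $a_{\mathbf{i}\mathbf{j}}=a_{\mathbf{j}\mathbf{i}}$. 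Applying $\wedge$ to the defining relation $x_{\mathbf{i}}=\prod_{\mathbf{j}}(1-x_{\mathbf{j}})^{a_{\mathbf{i}\mathbf{j}}}$ gives $x_{\mathbf{i}}\wedge(1-x_{\mathbf{i}})=\sum_{\mathbf{j}}a_{\mathbf{i}\mathbf{j}}\,(1-x_{\mathbf{j}})\wedge(1-x_{\mathbf{i}})$, and after summing over $\mathbf{i}$ the antisymmetry of $\wedge$ combined with $a_{\mathbf{i}\mathbf{j}}=a_{\mathbf{j}\mathbf{i}}$ forces the right-hand side to equal its own negative. Hence $\delta\xi_{\mathbf{x}}=0$ and $\xi_{\mathbf{x}}\in\mathcal{B}(F)$.

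The content of the theorem is the second condition, and this is where the Y-system does the work. I would first use the correspondence set up in Section~\ref{YS2}: under a change of variables of the form $Y_{\mathbf{i}}=x_{\mathbf{i}}/(1-x_{\mathbf{i}})$, a solution of $\mathbf{x}=(1-\mathbf{x})^{A}$ is precisely a constant (time-independent) solution of the Y-system attached to the pair $(X,X')$, so that $x_{\mathbf{i}}=Y_{\mathbf{i}}/(1+Y_{\mathbf{i}})$ and $\xi_{\mathbf{x}}=\sum_{\mathbf{i}}[\,Y_{\mathbf{i}}/(1+Y_{\mathbf{i}})\,]$. The main external input is then Zamolodchikov periodicity: the $(X,X')$ Y-system is periodic with some period $N=N(X,X')$, which is known for every pair of $ADET$ diagrams.

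The heart of the proof is to read this periodicity off inside the Bloch group. Running the Y-system once around a full period and translating the underlying cluster mutations through the dilogarithm turns the periodicity into an iterated five-term relation; concretely, I expect the period element $\Xi=\sum_{t=1}^{N}\sum_{\mathbf{i}}[\,Y_{\mathbf{i}}(t)/(1+Y_{\mathbf{i}}(t))\,]$ to equal, in the pre-Bloch group $\mathcal{P}(F)$, a sum of five-term relators together with finitely many terms of constant, solution-independent argument. The five-term relators vanish in $\mathcal{P}(F)$ by definition and the constant terms lie in the kernel of every Bloch--Wigner regulator $D\circ\sigma$, so $\Xi$ is a torsion element of $\mathcal{B}(F)$. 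Since the solution is constant, every time-slice contributes the same summand and $\Xi=N\,\xi_{\mathbf{x}}$; thus $N\xi_{\mathbf{x}}$ is torsion and therefore so is $\xi_{\mathbf{x}}$.

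I expect the last step to be the main obstacle: making the dictionary between one period of the $(X,X')$ Y-system and an honest sequence of five-term relations in $\mathcal{P}(F)$ precise for a general $ADET$ pair, and controlling the residual constant terms closely enough to see they are killed by $D$. Two smaller points will also need care, namely fitting the tadpole diagrams $T_{n}$ into the periodicity framework (where they are handled by folding rather than being simply-laced), and dealing with degenerate solutions for which some $x_{\mathbf{i}}\in\{0,1\}$, so that the symbols $[x_{\mathbf{i}}]$ and the Y-variables must be interpreted or removed before the Bloch-group computation applies.
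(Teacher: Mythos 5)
Your setup is sound and matches the paper's reduction: the boundary computation via symmetry of $A$ is correct (and makes explicit something the paper leaves implicit), the change of variables $y_{\mathbf{i}}=x_{\mathbf{i}}/(1-x_{\mathbf{i}})$ is exactly Proposition \ref{PROPY}, and your worry about degenerate solutions is resolved there too, since positivity of the diagonal entries of $A$ rules out $x_{\mathbf{i}}\in\{0,1\}$. But the heart of your argument --- that one full period of the Y-system, i.e.\ the element $\Xi=\sum_{t}\sum_{\mathbf{i}}[\,Y_{\mathbf{i}}(t)/(1+Y_{\mathbf{i}}(t))\,]$, decomposes in $\mathcal{P}(F)$ into five-term relators plus solution-independent constant terms --- is precisely the step you flag as ``expected'' and never prove, and it is exactly the step the paper identifies as the failure mode of Nahm's original sketch: the introduction states that a proof ``assuming the periodicity of Y-system'' is incomplete and that ``more structural properties of Y-system need to be used.'' Periodicity by itself gives you no dictionary between the Y-system dynamics and an honest chain of five-term relations in $\mathcal{P}(F)$, so as written your proposal reduces the theorem to an unestablished (and strictly stronger) claim, namely an explicit relator decomposition of $N\xi_{\mathbf{x}}$.

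The paper closes this gap by a different and weaker-demand mechanism that never exhibits five-term relations explicitly. The two structural inputs are Nakanishi's constancy condition (Theorem \ref{thmconst2}), which says the wedge identity $\sum Y_{\mathbf{i}}(u)\wedge(1+Y_{\mathbf{i}}(u))=0$ holds identically in $\Lambda^2\mathbb{Q}(y)^{*}$ at the level of rational functions in the initial data, and Nakanishi's sign/monomial theorem (Theorem \ref{thmsgn2}), which controls each $Y_{\mathbf{i}}(u)$ near $y=0$. Given the wedge identity, Proposition \ref{thmgon} shows that the Bloch--Wigner sum $\sum D(f_{\mathbf{i}}(u)|_{\mathbf{x}})$ is \emph{constant} in the evaluation point $\mathbf{x}$; the sign theorem then lets one evaluate at the origin, where every $f_{\mathbf{i}}(u)$ is $0$ or $1$ and $D$ vanishes, so the constant is zero (Proposition \ref{torsion}). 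Specializing to the constant solution and using uniformity of the folding multiplicities, one gets $\sum_{\mathbf{i}}D(\sigma(x_{\mathbf{i}}))=0$ for every embedding $\sigma:F\hookrightarrow\mathbb{C}$, and torsion follows from Zagier's criterion that vanishing of $D\circ\sigma$ for all embeddings forces torsion in $\mathcal{B}(F)$ (Corollary \ref{torsion2}). If you want to salvage your route, you would essentially have to reprove Nakanishi's results (which do go through the cluster-algebra mutation structure you allude to); the efficient fix is to replace your ``iterated five-term relation'' step by the constancy condition plus the evaluation-at-zero argument.
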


The proof is obtained using properties of $Y$-systems whose definition is given in Section \ref{YS}. Frenkel and Szenes studied dilogarithm identities and their relation to torsion elements in algebraic K-theory \cite{FS1} and $Y$-systems \cite{FS2}. In \cite[Section 4]{Nahm}, Nahm briefly explains how one can obtain a proof of the above statement assuming the periodicity of $Y$-systems. It seems that, however, more structural properties of $Y$-systems need to be used to complete the proof.  Nakanishi's paper \cite{Nakanishi} contains most of results used here except relating results to the Bloch group.

The $Y$-system, which can be defined for a pair of Dynkin diagrams, turns out to be very useful to study (\ref{TBAeqn}) as we will see in Proposition \ref{PROPY}.  We can relate the equation to the $Y$-system and then using properties of $Y$-systems, we can show that all solutions give torsion elements of the Bloch group. 

Many conjectured properties of $Y$-systems such as periodicities and functional dilogarithm identities, originated from thermodynamic Bethe ansatz approach of conformal field theory \cite{Zam,MR1216231, MR1325407}, had remained open for years but now have been proved rigorously due to recent development of the theory of cluster algebras. See \cite{FZ} and \cite{Keller}. One may hope that a correct reformulation of Nahm's conjecture incorporates this and it would help us to find new directions toward understanding modular triples and modular $q$-hypergeometric series.

In Sections \ref{BG} and \ref{YS}, we give necessary definitions and properties of the Bloch group and $Y$-systems. We prove Theorem \ref{thmmain} in Section \ref{PF}. 

\section{the Bloch group} \label{BG}
In this section, we give the definition of the Bloch group for a field and explain the role of the Bloch-Wigner dilogarithm function in its study. See \cite{Zagier} and \cite{Zagier2} for a more thorough discussion of the topic.
\begin{definition}
Let $F$ be a field and $\Lambda^2 F^{\times}$ be the abelian group of formal sums of $x\wedge y, x,y\in F^{\times}$
modulo the relations $x\wedge x=0, (x_1x_2)\wedge y=x_1\wedge y+x_2\wedge y$ and $x\wedge (y_1y_2)=x\wedge y_1+x\wedge y_2$. 

Let $\partial : \mathbb{Z}[{F^{\times}\backslash \{1\}}]\to \Lambda^2({{F}^{\times}})$ be a $\mathbb{Z}$-linear map defined by $\partial([x])=x\wedge (1-x)$.
Let $A({F})=\operatorname{ker}\partial$ and $C({F})$ the subgroup of $A({F})$ generated by the elements
\begin{equation} \label{fiveterm}
[x]+[1-xy]+[y]+[\frac{1-y}{1-xy}]+[\frac{1-x}{1-xy}],
\end{equation}
\begin{equation*}
[x]+[1-x]\text{ and }[x]+[\frac{1}{x}].
\end{equation*}

It is convenient to set $[0]=[1]=[\infty]=0$ in $A(F)$. We call (\ref{fiveterm}) the five-term relation. The Bloch group  $\mathcal{B}({F})$ of ${F}$ is defined to be $A({F})/C({F})$. 
\end{definition}

The dilogarithm function is defined by 
$$\operatorname{Li}_2(z) = -\int_0^z{{\log (1-t)}\over t} dt$$
for $z\in \mathbb C-[1,\infty)$. For $|z|<1$, we have the following power series expansion:
$$\operatorname{Li}_2(z)= \sum_{n=1}^\infty {z^n \over n^2}.$$

Let us define a variant of the dilogarithm function : the Bloch-Wigner dilogarithm function. It is given by
$$D(z)=\text{Im}(\operatorname{Li}_2(z))+\log|z|\arg(1-z).$$
It is a real analytic function on $\mathbb{C}$ except at 0 and 1, where it is continuous but not differentiable. Since $D(\bar{z})=-D(z)$, it vanishes on $\mathbb{R}$. 
It satisfies the following functional equations :
\begin{equation}\label{functid1}
D(x)+D(1-xy)+D(y)+D(\frac{1-y}{1-xy})+D(\frac{1-x}{1-xy})=0,
\end{equation}
\begin{equation}\label{functid2}
D(x)+D(1-x) =D(x)+D(\frac{1}{x})=0.
\end{equation}

The Bloch-Wigner dilogarithm $D(z)$ can be used to define a map from $\mathcal{B}(\mathbb{C})$ to $\mathbb{R}$. For $\xi=\sum_{i} n_i[x_i] \in \mathcal{B}(\mathbb{C})$, let $D(\xi)=\sum_{i} n_i D(x_i)$. By (\ref{functid1}) and (\ref{functid2}), it is well-defined. Let $F$ be a number field of degree $r_1+2r_2$ over $\mathbb{Q}$ where $r_1$ denotes the number of real embeddings and $r_2$ the number of complex non-real embeddings up to conjugation. For an embedding $\sigma : F\hookrightarrow \mathbb{C}$ and $\xi \in \mathcal{B}(F)$, we may consider  $D\left(\sigma(\xi)\right)$. If $D\left(\sigma(\xi)\right)=0$ for all such embeddings $\sigma$, then $\xi \in \mathcal{B}(F)$ is a torsion element  in $\mathcal{B}(F)$. This is a consequence of the known isomorphism between $K_3(F)\otimes \mathbb{Q}$ and $\mathcal{B}(F)\otimes \mathbb{Q}$ and of Borel's description of $K_3(F)$ modulo torsion; for more details and references, see \cite[Section 2]{Zagier2}.

\begin{proposition}\cite{FS2, MR1399471} \label{thmgon}
Let $F=\mathbb{C}(y_1,\cdots,y_n)$ be a field of rational functions. Given an $\sum_{i}n_i [f_i]\in \mathbb{Z}[F]$ such that $\sum_{i}n_i\left(f\wedge (1-f)\right)=0$ in $\Lambda^2 F^{^\times}$, the function
$z\mapsto \sum_{i} n_iD\left(f_i(z)\right)$ from $\mathbb{C}^n$ to $\mathbb{R}$ is constant.
\end{proposition}

See \cite[Chapter II. Section 2.A]{Zagier} for a short proof and references. In order to obtain such a set of rational functions satisfying the condition of the above statement, we now turn our attention to $Y$-systems. They are good suppliers for such rational functions as we can see in Proposition \ref{torsion}.

\section{$Y$-systems} \label{YS} 
\subsection{The $Y$-system for a pair of $ADE$ Dynkin diagrams}
In this section, we closely follow the notations of \cite{Nakanishi}. Let $X$ be a Dynkin diagram of type $ADE$ with the index set $I$.  The rank and the Coxeter number of $X$ will be denoted by $r$ and $h$. We denote the Cartan matrix of $X$ by $\mathcal{C}(X)$ and the adjacency matrix by $\mathcal{I}(X)=2I_r-\mathcal{C}(X)$ where $I_r$ is the identity matrix of size $r$. We call a decomposition $I=I_{+}\cup I_{-}$ bipartite if $\mathcal{I}(X)_{ij}=1$ implies $(i,j)\in I_{+}\times I_{-}$ or $(i,j)\in I_{-}\times I_{+}$.
Now consider an ordered pair of Dynkin diagrams $(X,X')$. For another Dynkin diagram $X'$, $I'=I'_{+}\cup I'_{-}$, $r'$, $h'$, $\mathcal{C}(X')$, and $\mathcal{I}(X')$ will be defined analogously.

We give an alternate bicoloring on the pair of Dynkin diagrams. Let us fix bipartite decompositions of $I$ and $I'$. Let $ \mathbf{I}= I\times I'$ and $\mathbf{I}=\mathbf{I}_{+}\sqcup \mathbf{I}_{-}$ where $\mathbf{I}_{+}=(I_{+}\times I'_{+}) \sqcup (I_{-}\times I'_{-})$ and $\mathbf{I}_{-}=(I_{+}\times I'_{-}) \sqcup (I_{-}\times I'_{+})$. Let $\epsilon : \mathbf{I}\to \{1,-1\}$ be the function defined by $\epsilon(\mathbf{i})=\pm 1$ for $\mathbf{i}\in \mathbf{I}_{\pm}$ and $P_{\pm} =\{(\mathbf{i},u)\in \mathbf{I}\times\mathbb{Z}| \epsilon(\mathbf{i})(-1)^u=\pm 1\}$. Roughly speaking, we want our alternate bicoloring interchanges their colors as $u\in \mathbb{Z}$ changes by 1. 

\begin{definition}
For a family of variables, $\{ Y_{ii'}(u)|i\in I, i'\in I', u\in \mathbb{Z}\}$, the $Y$-system $\mathbb{Y}(X,X')$ associated with a pair $(X,X')$ of Dynkin diagrams of type $ADE$ is defined as a system of recurrence relations as follows : 
$$
Y_{ii'}(u-1)Y_{ii'}(u+1)=\frac{\prod_{j:j\sim i} (1+Y_{ji'}(u))}{\prod_{j':j'\sim i'} (1+Y_{ij'}(u)^{-1})}
$$
where $a \sim b$ means $a$ is adjacent to $b$.
\end{definition}
Note that $\mathbb{Y}(X,X')$ consists of two decoupled copies, $\{Y_{\mathbf{i}}(u) | (\mathbf{i},u)\in P_{+}\}$ and $\{Y_{\mathbf{i}}(u) | (\mathbf{i},u)\in P_{-}\}$. 
If $(\mathbf{i},u)\in P_{+}$, $Y_{\mathbf{i}}(u)$ can be written as a rational function of variables $\{Y_{\mathbf{i}}(0) |\mathbf{i}\in \mathbf{I}_{+}\}$ and $\{Y_{\mathbf{i}}(-1)|\mathbf{i}\in \mathbf{I}_{-}\}$ 
whereas if $(\mathbf{i},u)\in P_{-}$, $Y_{\mathbf{i}}(u)$ only depends on $\{Y_{\mathbf{i}}(0) |\mathbf{i}\in \mathbf{I}_{-}\}$ and $\{Y_{\mathbf{i}}(-1)|\mathbf{i}\in \mathbf{I}_{+}\}$.

\begin{example} \label{4:Y21example}
Let us consider the example of $\mathbb{Y}(A_2,A_1)$. Since the index set $I'$ of the $A_1$ Dynkin diagram consists of the single element 1, we just set $Y_{i,1}=Y_i$ for $i=1,2$. The recurrence relation of the $Y$-system is
$$Y_{i}(u-1)Y_{i}(u+1)=\prod_{j:j\sim i} (1+Y_{j}(u)).$$ If we write the sequence explicitly, we get the following : 
{\small
$$
\begin{array}{c|c|c|c|c|c|c|c|c|c|c|c|c|c}
u & -1 & 0 & 1 & 2 & 3 & 4 & 5 & \cdots & 10 & 11 & \cdots \\
\hline
Y_1(u) & \frac{1}{\alpha } & y & \alpha  (\beta +1) & \frac{x+y+1}{x y} &
   \frac{\alpha +1}{\alpha  \beta } & x & \beta  & \cdots & \frac{1}{\alpha
   } & y & \cdots \\
\hline
Y_2(u) & x & \beta  & \frac{y+1}{x} & \frac{\beta  \alpha +\alpha +1}{\beta } &
   \frac{x+1}{y} & \frac{1}{\alpha } & y & \cdots & x & \beta  &
   \cdots \\
\end{array}.
$$
}

We can clearly observe the decoupling of the $Y$-system and that it is a periodic sequence of period 10. Another important thing to note is that all terms are Laurent polynomials of the initial conditions. 
\end{example}

\begin{definition}
If a solution $\{Y_{\mathbf{i}}(u)|\mathbf{i}\in \mathbf{I}, u\in \mathbb{Z}\}$ of $\mathbb{Y}(X,X')$ does not have any dependence on $u$ so that $Y_{\mathbf{i}}(u)=y_{\mathbf{i}}$ for each $\mathbf{i}$ in a field, it must satisfy the following system of $rr'$ equations of $rr'$ variables :
$$
y_{ii'}^2=\frac{\prod_{j:j\sim i} (1+y_{ji'})}{\prod_{j':j'\sim i'} (1+y_{ij'}^{-1})}. 
$$
We call it the constant $Y$-system and denote it by $\mathbb{Y}_{c}(X,X')$.
\end{definition}
One can see the importance of the constant $Y$-system in Proposition \ref{PROPY} in our study.

\subsection{Properties of $Y$-systems}
Now we state several important results about $Y$-systems. They will be used in our proof of Theorem \ref{thmmain}. For all theorems below, we assume that $\{Y_{\mathbf{i}}(u)|\mathbf{i}\in \mathbf{I}, u\in \mathbb{Z}\}$ satisfies the $Y$-system $\mathbb{Y}(X,X')$ associated to a pair of $ADE$ Dynkin diagrams.

\begin{theorem} \cite{Keller}\label{thmperiod} For any $\mathbf{i}\in \mathbf{I}$, 
$$
Y_{\mathbf{i}}(u+2(h+h'))=Y_{\mathbf{i}}(u).
$$
\end{theorem}

Let $(y_{\mathbf{i}})_{\mathbf{i}\in \mathbf{I}}$ be indeterminates and set
\begin{alignat*}{2}
Y_{\mathbf{i}}(0) = y_{\mathbf{i}}, \mathbf{i} \in \mathbf{I}_{+}, \\
Y_{\mathbf{i}}(-1) = y_{\mathbf{i}}^{-1},\mathbf{i}\in \mathbf{I}_{-}.
\end{alignat*}

Then each $Y_{\mathbf{i}}(u)$ with $(\mathbf{i},u)\in P_{+}$ can be regarded as a rational function in $y_{\mathbf{i}}$'s. Let $\mathbb{Q}(y)$ be the field of rational functions in $y_{\mathbf{i}}$'s. For $f\in \mathbb{Q}(y)$, $f|_{\mathbf{a}}$ denotes the evaluation of $f$ at $(y_{\mathbf{i}})=\mathbf{a}=(a_{\mathbf{i}})\in \mathbb{C}^{n}$ where $n=rr'$. 

\begin{theorem}\cite{Nakanishi}\label{thmsgn}
For $(\mathbf{i},u)\in P_{+}$, 
$Y_{\mathbf{i}}(u)=G_{\mathbf{i}}(u)T_{\mathbf{i}}(u)\in \mathbb{Q}(y)$
where $G_{\mathbf{i}}(u)\in \mathbb{Q}(y)$ satisfies $G_\mathbf{i}(u)|_{(0,\cdots,0)}=1$ and $T_{\mathbf{i}}(u)\neq 1$ is a positive or negative monomial in $y_{\mathbf{i}}$'s, i.e. $T_{\mathbf{i}}(u)$ can be written as a product of $y_{\mathbf{i}}$'s or as a product of $y_{\mathbf{i}}^{-1}$'s.
\end{theorem}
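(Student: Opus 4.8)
The plan is to recognize the $Y$-system as the coefficient (Y-seed) dynamics of a cluster algebra and then read off the decomposition from the separation formula together with sign-coherence of $c$-vectors. First I would encode the pair $(X,X')$ as a skew-symmetric exchange matrix $B$ (equivalently a quiver) on the index set $\mathbf{I}$, built from $\mathcal{I}(X)$ and $\mathcal{I}(X')$ and compatible with the tensor structure $\mathcal{C}(X)\otimes\mathcal{C}(X')^{-1}$. The bipartite splitting $\mathbf{I}=\mathbf{I}_{+}\sqcup\mathbf{I}_{-}$ has no edges inside a color class, so the mutations at all vertices of one color commute; I would let $\mu_{+}=\prod_{\mathbf{i}\in\mathbf{I}_{+}}\mu_{\mathbf{i}}$ and $\mu_{-}$ be these composite mutations, and check that the single step $u\mapsto u+1$ of the recurrence is exactly $\mu_{+}$ or $\mu_{-}$. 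With the initial data $Y_{\mathbf{i}}(0)=y_{\mathbf{i}}$ and $Y_{\mathbf{i}}(-1)=y_{\mathbf{i}}^{-1}$, each $Y_{\mathbf{i}}(u)$ with $(\mathbf{i},u)\in P_{+}$ is then literally the $y$-variable reached after the corresponding alternating word $\cdots\mu_{-}\mu_{+}\mu_{-}\mu_{+}$ in the universal semifield $\mathbb{Q}_{\mathrm{sf}}(y)$.

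Next I would invoke the separation formula of Fomin--Zelevinsky for $y$-variables: every such variable factors as
\begin{equation*}
Y_{\mathbf{i}}(u)=\Big(\prod_{\mathbf{j}\in\mathbf{I}} y_{\mathbf{j}}^{\,c_{\mathbf{j}}}\Big)\cdot\frac{N_{\mathbf{i}}(u)}{D_{\mathbf{i}}(u)},
\end{equation*}
where $(c_{\mathbf{j}})_{\mathbf{j}\in\mathbf{I}}$ is the $c$-vector attached to $(\mathbf{i},u)$ and $N_{\mathbf{i}}(u)$, $D_{\mathbf{i}}(u)$ are products of $F$-polynomials evaluated at the $y_{\mathbf{j}}$'s. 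I would set $T_{\mathbf{i}}(u):=\prod_{\mathbf{j}} y_{\mathbf{j}}^{\,c_{\mathbf{j}}}$ and $G_{\mathbf{i}}(u):=N_{\mathbf{i}}(u)/D_{\mathbf{i}}(u)$. Since every $F$-polynomial has constant term $1$, each factor of $G_{\mathbf{i}}(u)$ specializes to $1$ at $y=(0,\dots,0)$, and hence $G_{\mathbf{i}}(u)|_{(0,\dots,0)}=1$, as required.

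It then remains to control the monomial $T_{\mathbf{i}}(u)=\prod_{\mathbf{j}} y_{\mathbf{j}}^{\,c_{\mathbf{j}}}$. The claim that $T_{\mathbf{i}}(u)$ is a positive or a negative monomial is exactly the statement that all entries of the $c$-vector $(c_{\mathbf{j}})$ have one common sign; equivalently, $T_{\mathbf{i}}(u)$ is the image of $Y_{\mathbf{i}}(u)$ in the tropical semifield $\mathrm{Trop}(y)$, and the tropical $y$-variables are \emph{sign-coherent}. That $T_{\mathbf{i}}(u)\neq 1$ amounts to the $c$-vector being nonzero, which holds because $c$-vectors are (positive or negative) real roots and in particular never vanish. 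Concretely I would also describe a direct induction along the mutation word: assuming each $Y_{\mathbf{j}}(u)=G_{\mathbf{j}}(u)T_{\mathbf{j}}(u)$ with $T_{\mathbf{j}}(u)$ coherent, one tracks how the tropical sign propagates through each composite $\mu_{\pm}$, the alternating structure of $P_{\pm}$ keeping the bookkeeping uniform across the two decoupled copies.

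The main obstacle is precisely the sign-coherence of the $c$-vectors: this is the one genuinely non-formal ingredient, and once it is in hand the factorization and the normalization $G_{\mathbf{i}}(u)|_{(0,\dots,0)}=1$ follow mechanically. For our skew-symmetric $B$ it is available from the categorification of $c$-vectors via quiver representations (Derksen--Weyman--Zelevinsky) or from scattering-diagram methods, and the finite type of $X,X'$ together with the periodicity of Theorem~\ref{thmperiod} ensures that only finitely many $c$-vectors occur, each a root of the relevant root system. Assembling these facts yields the claimed decomposition $Y_{\mathbf{i}}(u)=G_{\mathbf{i}}(u)T_{\mathbf{i}}(u)$.
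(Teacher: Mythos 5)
Your proposal is correct and matches the intended proof: the paper gives no argument of its own for Theorem~\ref{thmsgn}, importing it from Nakanishi, whose proof is precisely your route --- realizing the bipartite Y-system as composite Y-seed mutations $\mu_{\pm}$, applying the Fomin--Zelevinsky separation formula so that $T_{\mathbf{i}}(u)$ is the tropical ($c$-vector) monomial and $G_{\mathbf{i}}(u)$ a ratio of $F$-polynomials with constant term $1$, with sign-coherence (hence the common sign of the exponents, and $T_{\mathbf{i}}(u)\neq 1$ from nonvanishing of $c$-vectors) supplied by the Derksen--Weyman--Zelevinsky categorification in the skew-symmetric case. You correctly isolate sign-coherence as the one non-formal ingredient; nothing is missing.
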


We state a property of the $Y$-system, which Nakanishi called the constancy condition. 
\begin{theorem}\cite[Proposition 3.2 (i)]{Nakanishi} \label{thmconst}
The following property holds :
$$
\sum_{(\mathbf{i},u)\in S_{+}} Y_{\mathbf{i}}(u)\wedge (1+Y_{\mathbf{i}}(u))=0\in \Lambda^2 \mathbb{Q}(y)^{\times}
$$ where $S_{+}=\{(\mathbf{i},u) |0\leq u \leq 2(h+h')-1,(\mathbf{i},u)\in P_{+}\}$.
\end{theorem}

\subsection{The $Y$-system for a pair of foldings of $ADE$ Dynkin diagrams} \label{YS2} 
The results in the previous section can be extended to include all foldings of $ADE$ diagrams almost trivially.

First note that for a pair  $(X,X')$ of directed graphs with the index sets $I$ and $I'$, we can redefine the $Y$-system in terms of their adjacency matrices of graphs as follows : 

\begin{equation}
Y_{ii'}(u-1)Y_{ii'}(u+1)=\frac{\prod _{j\in I} (1+Y_{ji'}(u))^{\mathcal{I}(X)_{ij}}}{\prod _{j'\in I'} \left(1+Y_{ij'}(u)^{-1}\right)^{\mathcal{I}(X')_{i'j'}}}.
\end{equation}

Let $X$ be a Dynkin diagram of type $ADE$. Let us pretend to think that there are directed edges $(i,j)$ and $(j,i)$ for the edge connecting $i$ and $j$ in $X$ and then we can regard it as a directed graph with the adjacency matrix $\mathcal{I}(X)$.
For a group $G$ of diagram automorphisms of $X$, we can define a quotient diagram $\bar{X}=X/G$ as follows : $\bar{X}$ has the vertex set $\bar{I}$, the orbit of $I$ under $G$ and the ordered pair $(\bar{i},\bar{j})$ is an edge of $\bar{X}$ if $(i,j)$ is an edge of $G$ and its multiplicity $\mathcal{I}(\bar{X})_{\bar{i}\bar{j}}$ is defined as the number of preimages of $\bar{j}$  for a fixed representative $i$ of $\bar{i}$. Let us call $\bar{X}$ the folding of $X$ by $G$. 

Note that $\bar{X}$ is generally a directed graph and its adjacency matrix $\mathcal{I}(\bar{X})$ may not be symmetric. Let us call the matrix $\mathcal{C}(\bar{X})=2I_r-\mathcal{I}(\bar{X})$ the Cartan matrix of $\bar{X}$. If $G$ is a trivial group, we just get $\bar{X}=X$. The Coxeter number of $\bar{X}$ is the same as the Coxeter number of $X$. 

The tadpole graph $T_{r}$ is obtained as the folding of $X=A_{2r}$ by the diagram automorphism group of order 2. The Cartan matrix $\mathcal{C}(T_r)$ is the same as $\mathcal{C}(A_r)$ except that the diagonal entry corresponding to the vertex with a loop is 1 instead of 2 because $T_r$ diagram has a loop.

The folding $\bar{X}$ inherits the bipartite decomposition $\bar{I}=\bar{I}_{+}\cup \bar{I}_{-}$ of $X$ except when $\bar{X}=T_n$ because $T_n$ diagram has a loop and cannot be bipartite, in which case, we just set $\bar{I}={\bar{I}}_{+} ={\bar{I}}_{-}$.

Let $(\bar{X},\bar{X'})$ be a pair of foldings of $ADE$ Dynkin diagrams. For $ \mathbf{\bar{I}}= \bar{I}\times \bar{I}'$ we define a decomposition $\mathbf{\bar{I}}=\mathbf{\bar{I}}_{+}\cup \mathbf{\bar{I}}_{-}$ where $\mathbf{\bar{I}}_{+}=(\bar{I}_{+}\times \bar{I}'_{+}) \cup (\bar{I}_{-}\times \bar{I}'_{-})$ and $\mathbf{\bar{I}}_{-}=(\bar{I}_{+}\times \bar{I}'_{-}) \cup (\bar{I}_{-}\times \bar{I}'_{+})$. When $\bar{X}=T_{r}$ or  $\bar{X'}=T_{r'}$, we just get $\bar{\mathbf{I}}=\bar{\mathbf{I}}_{+}= \bar{\mathbf{I}}_{-}$. When $\bar{X}=T_{r}$ or  $\bar{X'}=T_{r}$, we also set ${\bar{P}}_{+} =\bar{P}_{-} =  \mathbf{\bar{I}}\times\mathbb{Z}$. Otherwise, we can define $\bar{P}_{\pm}$ similarly as in the previous section. 

Now we extend the theorems in the previous subsection to the Y-system associated to a pair $(\bar{X},\bar{X'})$ of foldings of $ADE$ Dynkin diagrams. For the rest of this subsection, let $\left(Y_{\bar{\mathbf{i}}}(u)\right)_{(\bar{\mathbf{i}},u)\in \mathbf{\bar{I}}\times \mathbb{Z}}$ be a solution of $\mathbb{Y}(\bar{X},\bar{X'})$. Note that if $\left(Y_{\bar{\mathbf{i}}}(u)\right)_{(\bar{\mathbf{i}},u)\in \mathbf{\bar{I}}\times \mathbb{Z}}$ is a solution of $\mathbb{Y}(\bar{X},\bar{X'})$, then we can obtain a solution $\left(Y_{\mathbf{i}}(u)\right)_{(\mathbf{i},u)\in \mathbf{I}\times \mathbb{Z}}$ of $\mathbb{Y}(X,X')$ by simply setting $Y_{\mathbf{i}}(u):=Y_{\bar{\mathbf{i}}}(u)$ for each $(\mathbf{i},u)\in \mathbf{I}\times \mathbb{Z}$. Thus, a solution of $\mathbb{Y}(\bar{X},\bar{X'})$ is nothing but a solution of $\mathbb{Y}(X,X')$ with symmetries given by the group $G$.

As before, let $(y_{\bar{\mathbf{i}}})_{\bar{\mathbf{i}}\in \bar{\mathbf{I}}}$ be indeterminates and set
\begin{alignat*}{2}
Y_{\bar{\mathbf{i}}}(0) = y_{\bar{\mathbf{i}}}, \bar{\mathbf{i}} \in \bar{\mathbf{I}}_{+}, \\
Y_{\bar{\mathbf{i}}}(-1) = y_{\bar{\mathbf{i}}}^{-1}, \bar{\mathbf{i}} \in \bar{\mathbf{I}}_{-}.
\end{alignat*}
Then again each $Y_{\bar{\mathbf{i}}}(u)$ with $(\bar{\mathbf{i}},u)\in {\bar{P}}_{+}$ can be regarded as an element of $\mathbb{Q}(y)$, the field of rational functions in $y_{\bar{\mathbf{i}}}$'s. 

\begin{theorem} \label{thmsgn2}
Theorem \ref{thmperiod} and \ref{thmsgn} hold true for a solution of $\mathbb{Y}(\bar{X},\bar{X'})$. 
\end{theorem}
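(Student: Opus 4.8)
The plan is to prove Theorem~\ref{thmsgn2} by reducing the statement about a folded pair $(\bar{X},\bar{X'})$ to the already-established Theorems~\ref{thmperiod} and~\ref{thmsgn} for the unfolded pair $(X,X')$, using the explicit lift described in the excerpt. The key observation is that the folding map sends a solution of $\mathbb{Y}(X,X')$ to a solution of $\mathbb{Y}(\bar{X},\bar{X'})$ by the rule $Y_{\bar{\mathbf{i}}}(u)=Y_{\mathbf{i}}(u)$, where on the right-hand side $\mathbf{i}$ is any chosen representative of the orbit $\bar{\mathbf{i}}$. The first thing I would verify is that this assignment is well-defined: if $i_1$ and $i_2$ are two representatives of the same $G$-orbit $\bar{i}$, then the diagram automorphism carrying $i_1$ to $i_2$ induces a symmetry of the full Y-system $\mathbb{Y}(X,X')$, so any solution built from symmetric initial data (which is the relevant case for solutions descending to the quotient) satisfies $Y_{i_1 i'}(u)=Y_{i_2 i'}(u)$ for all $u$. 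This compatibility is what makes the folded recurrence, written in terms of the adjacency matrices $\mathcal{I}(\bar{X})$ and $\mathcal{I}(\bar{X'})$, hold automatically.

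\emph{Periodicity.} Theorem~\ref{thmperiod} transfers immediately: since $Y_{\bar{\mathbf{i}}}(u)=Y_{\mathbf{i}}(u)$ and the unfolded system satisfies $Y_{\mathbf{i}}(u+2(h+h'))=Y_{\mathbf{i}}(u)$, the folded quantities inherit the same periodicity. Here I would invoke the fact, recorded in the excerpt, that the Coxeter number of $\bar{X}$ equals that of $X$, so the period $2(h+h')$ is the correct one to state for the folded pair as well; no new argument is required beyond restricting the equality to orbit representatives.

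\emph{Sign/monomial structure.} For Theorem~\ref{thmsgn}, I would again restrict the decomposition $Y_{\mathbf{i}}(u)=G_{\mathbf{i}}(u)T_{\mathbf{i}}(u)$ to orbit representatives and push it through the quotient. The point is that specializing the unfolded rational functions to $G$-symmetric initial data $y_{\mathbf{i}}$ (i.e.\ setting $y_{i_1 i'}=y_{i_2 i'}=y_{\bar{\mathbf{i}}}$ for $i_1,i_2$ in the same orbit) is an algebra homomorphism $\mathbb{Q}(y)\to\mathbb{Q}(\bar{y})$ that sends $1\mapsto 1$ and monomials to monomials. Under this specialization $G_{\mathbf{i}}(u)$ still satisfies $G_{\bar{\mathbf{i}}}(u)|_{(0,\dots,0)}=1$, and $T_{\mathbf{i}}(u)$, being a product of $y_{\mathbf{i}}$'s or of $y_{\mathbf{i}}^{-1}$'s, specializes to a product of $y_{\bar{\mathbf{i}}}$'s or of $y_{\bar{\mathbf{i}}}^{-1}$'s respectively, so the positive/negative monomial dichotomy is preserved. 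The one case needing separate care is the tadpole $T_r$, where $\bar{I}=\bar{I}_+=\bar{I}_-$ and $\bar{P}_+=\bar{P}_-=\bar{\mathbf{I}}\times\mathbb{Z}$; I would check that the lift from $A_{2r}$ still yields a solution and that the non-vanishing $T_{\bar{\mathbf{i}}}(u)\neq 1$ persists, which it does because the tadpole's adjacency matrix is inherited from the order-two folding of $A_{2r}$.

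\emph{Main obstacle.} The genuinely delicate point is the well-definedness of the lift together with the tadpole's loop: because $T_r$ cannot be bipartite, one must confirm that the collapse $\bar{P}_+=\bar{P}_-$ is consistent with the two-decoupled-copies structure used throughout the unfolded theory, and that specializing the $A_{2r}$ solution to diagram-symmetric data does not accidentally force $T_{\bar{\mathbf{i}}}(u)=1$ or destroy the sign-coherence of the monomial. I expect the cleanest route is to treat the folding uniformly as the restriction of the $G$-symmetric specialization homomorphism and to handle the $T_r$ loop by direct inspection of the order-two quotient of $A_{2r}$, since everything else follows formally from Theorems~\ref{thmperiod} and~\ref{thmsgn}.
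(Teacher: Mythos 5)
Your proposal is correct and follows essentially the same route the paper takes: the paper offers no written proof of this theorem, asserting it ``almost trivially'' from the identification $Y_{\bar{\mathbf{i}}}(u)=Y_{\mathbf{i}}(u)$ between $G$-symmetric solutions of $\mathbb{Y}(X,X')$ and solutions of $\mathbb{Y}(\bar{X},\bar{X'})$, which is precisely the reduction you carry out. Your added details --- well-definedness via the diagram-automorphism symmetry of the Y-system, transferring the $G_{\mathbf{i}}(u)T_{\mathbf{i}}(u)$ decomposition through the variable-identifying specialization $\mathbb{Q}(y)\to\mathbb{Q}(\bar{y})$, and the separate inspection of the $T_r$ case where the folding of $A_{2r}$ exchanges the two decoupled copies $P_{+}$ and $P_{-}$ --- are exactly the points the paper leaves implicit, and they are handled correctly.
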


Let $\bar{S}_{+}=\{(\bar{\mathbf{i}},u)\in \bar{P}_{+}|0\leq u \leq 2(h+h')-1\}$.
For each $(\bar{\mathbf{i}},u)\in \bar{S}_{+}$, let  $d_{\bar{\mathbf{i}}}(u)$ be the number of preimages of $(\bar{\mathbf{i}},u)$ under the quotient map $S_{+} \to \bar{S}_{+}$ given by $(\mathbf{i},u) \mapsto (\bar{\mathbf{i}},u)$. Then Theorem \ref{thmconst} can be restated as follows :

\begin{theorem} \label{thmconst2}
If $\left(Y_{\bar{\mathbf{i}}}(u)\right)_{(\bar{\mathbf{i}},u)\in \mathbf{\bar{I}}\times \mathbb{Z}}$ is a solution of $\mathbb{Y}(\bar{X},\bar{X'})$, then
$$
\sum_{(\bar{\mathbf{i}},u)\in \bar{S}_{+}} d_{\bar{\mathbf{i}}}(u)(Y_{\bar{\mathbf{i}}}(u)\wedge (1+Y_{\bar{\mathbf{i}}}(u)))=0\in \Lambda^2 \mathbb{Q}(y)^{\times}.
$$
In other words, the element
$$
\sum_{(\bar{\mathbf{i}},u)\in \bar{S}_{+}}  d_{\bar{\mathbf{i}}}(u) [\frac{Y_{\bar{\mathbf{i}}}(u)}{1+Y_{\bar{\mathbf{i}}}(u)}]
$$
of the group ring of  $\mathbb{Q}(y)$ is an element of the Bloch group $\mathcal{B}(\mathbb{Q}(y))$.
\end{theorem}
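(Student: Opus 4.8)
The plan is to obtain the folded identity by pushing the unfolded constancy condition (Theorem \ref{thmconst}) forward along the folding, so that the weights $d_{\bar{\mathbf{i}}}(u)$ appear as fibre cardinalities. Write $q\colon \mathbf{I}\times\mathbb{Z}\to\bar{\mathbf{I}}\times\mathbb{Z}$ for the quotient and let $\pi\colon\mathbb{Q}(y_{\mathbf{i}})\to\mathbb{Q}(y_{\bar{\mathbf{i}}})$ be the partial specialization $y_{\mathbf{i}}\mapsto y_{q(\mathbf{i})}$ that identifies the indeterminates in a common $G$-orbit. First I would verify that $\pi$ is defined on each $Y_{\mathbf{i}}(u)$ and each $1+Y_{\mathbf{i}}(u)$ and that $\pi(Y_{\mathbf{i}}(u))=Y_{\bar{\mathbf{i}}}(u)$. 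The latter is exactly the passage from a solution of $\mathbb{Y}(X,X')$ to one of $\mathbb{Y}(\bar X,\bar X')$ recorded before Theorem \ref{thmsgn2}: the recurrence is equivariant under diagram automorphisms and the folded adjacency $\mathcal{I}(\bar X)_{\bar i\bar j}$ counts precisely the orbit members adjacent to a fixed lift, so an induction on $\lvert u\rvert$ starting from the initial data yields $\pi(Y_{\mathbf{i}}(u))=Y_{\bar{\mathbf{i}}}(u)$ for every lift $\mathbf{i}$ of $\bar{\mathbf{i}}$. For well-definedness I would invoke the structure Theorems \ref{thmsgn} and \ref{thmsgn2}: writing $Y_{\mathbf{i}}(u)=G_{\mathbf{i}}(u)T_{\mathbf{i}}(u)$ with $G_{\mathbf{i}}(u)|_{0}=1$, the denominator of $Y_{\mathbf{i}}(u)$ takes a nonzero value at the origin, a property untouched by identifying variables, so it does not specialize to $0$; the same holds for $1+Y_{\mathbf{i}}(u)$ since $Y_{\bar{\mathbf{i}}}(u)\not\equiv -1$.

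Next I would apply the induced homomorphism $\pi_{*}\colon\Lambda^2\mathbb{Q}(y_{\mathbf{i}})^{*}\to\Lambda^2\mathbb{Q}(y_{\bar{\mathbf{i}}})^{*}$ to Theorem \ref{thmconst} and regroup the terms of the sum over $S_{+}$ according to their images under $q$. Because the Coxeter numbers are unchanged by folding, $q$ carries $S_{+}$ into $\bar S_{+}$ with the same index range, and for each $(\bar{\mathbf{i}},u)\in\bar S_{+}$ all of its $P_{+}$-preimages contribute the identical term $Y_{\bar{\mathbf{i}}}(u)\wedge(1+Y_{\bar{\mathbf{i}}}(u))$. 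This gives
\begin{equation*}
0=\pi_{*}\Big(\sum_{(\mathbf{i},u)\in S_{+}}Y_{\mathbf{i}}(u)\wedge(1+Y_{\mathbf{i}}(u))\Big)=\sum_{(\bar{\mathbf{i}},u)\in\bar S_{+}}m_{\bar{\mathbf{i}}}(u)\,\big(Y_{\bar{\mathbf{i}}}(u)\wedge(1+Y_{\bar{\mathbf{i}}}(u))\big),
\end{equation*}
where $m_{\bar{\mathbf{i}}}(u)=\lvert\{\mathbf{i}\in q^{-1}(\bar{\mathbf{i}}):(\mathbf{i},u)\in P_{+}\}\rvert$ counts only the preimages surviving in the $P_{+}$-copy, whereas the desired weight is $d_{\bar{\mathbf{i}}}(u)=\lvert q^{-1}(\bar{\mathbf{i}})\rvert$.

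It then remains to compare $m_{\bar{\mathbf{i}}}(u)$ with $d_{\bar{\mathbf{i}}}(u)$, and here I would use that within $ADET$ the only nontrivial folding is the tadpole $A_{2r}\to T_{r}$. The flip of $A_{2r}$ acts freely and reverses the bipartite colouring, so $G$ acts freely on $\mathbf{I}$ and $d_{\bar{\mathbf{i}}}(u)=\lvert G\rvert$ is constant; moreover at each fixed level $u$ exactly half of an orbit satisfies $\epsilon(\mathbf{i})(-1)^{u}=1$ whenever a tadpole is present, so $m_{\bar{\mathbf{i}}}(u)=\lvert G\rvert/2$, while $m_{\bar{\mathbf{i}}}(u)=1=\lvert G\rvert$ in the trivial case. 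In every case $d_{\bar{\mathbf{i}}}(u)=c\,m_{\bar{\mathbf{i}}}(u)$ for a constant $c\in\{1,2\}$, whence
\begin{equation*}
\sum_{(\bar{\mathbf{i}},u)\in\bar S_{+}}d_{\bar{\mathbf{i}}}(u)\big(Y_{\bar{\mathbf{i}}}(u)\wedge(1+Y_{\bar{\mathbf{i}}}(u))\big)=c\sum_{(\bar{\mathbf{i}},u)\in\bar S_{+}}m_{\bar{\mathbf{i}}}(u)\big(Y_{\bar{\mathbf{i}}}(u)\wedge(1+Y_{\bar{\mathbf{i}}}(u))\big)=0,
\end{equation*}
which is the first assertion. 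Finally, since $\partial[\frac{Y}{1+Y}]=\frac{Y}{1+Y}\wedge\frac{1}{1+Y}=-\,Y\wedge(1+Y)$ in $\Lambda^2\mathbb{Q}(y_{\bar{\mathbf{i}}})^{*}$, this vanishing says exactly that $\partial$ kills $\sum_{(\bar{\mathbf{i}},u)\in\bar S_{+}}d_{\bar{\mathbf{i}}}(u)[\frac{Y_{\bar{\mathbf{i}}}(u)}{1+Y_{\bar{\mathbf{i}}}(u)}]$, so this element lies in $\ker\partial=A(\mathbb{Q}(y))$ and represents a class in $\mathcal{B}(\mathbb{Q}(y))$, which is the second assertion.

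The hard part will be the bookkeeping that turns Theorem \ref{thmconst} into the weighted folded sum: one must be sure the specialization $\pi$ is legitimate on every wedge factor (no denominator collapses) and, above all, that $d_{\bar{\mathbf{i}}}(u)$ is a fixed integer multiple of the $P_{+}$-fibre count $m_{\bar{\mathbf{i}}}(u)$. It is the colour-reversing nature of the tadpole folding that makes the ratio $c$ constant and hence lets the single unfolded identity $\sum_{S_{+}}(\cdots)=0$ propagate to the weighted folded identity.
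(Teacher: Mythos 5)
Your proof is correct and coincides with the argument the paper leaves implicit (the paper merely says Theorem \ref{thmconst} ``can be restated'' in the folded setting): namely, identify the variables $y_{\mathbf{i}}$ along orbits of the folding --- which, by equivariance of the recurrence and the definition of $\mathcal{I}(\bar{X})$, sends $Y_{\mathbf{i}}(u)$ to $Y_{\bar{\mathbf{i}}}(u)$, exactly the passage recorded before Theorem \ref{thmsgn2} --- and regroup the sum over $S_{+}$ by fibres of the quotient map; your attention to well-definedness of the specialization (via the structure Theorem \ref{thmsgn}) is in fact more careful than the paper. One clarification: the paper's own usage in Corollary \ref{torsion2}, where $d_{\bar{\mathbf{i}}}(u)=2$ for $(T_r,T_{r'})$ and $d_{\bar{\mathbf{i}}}(u)=1$ otherwise, shows that despite its loose phrasing $d_{\bar{\mathbf{i}}}(u)$ is intended to be precisely your $P_{+}$-fibre count $m_{\bar{\mathbf{i}}}(u)$ rather than the full fibre cardinality $\lvert q^{-1}(\bar{\mathbf{i}})\rvert$, so your constant $c\in\{1,2\}$ is an artifact of a stricter literal reading of the definition of $d$ --- harmless here, since the assertion is a vanishing statement and the two weightings differ by a positive integer factor.
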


Now we prove that the $Y$-system produces a torsion element of the Bloch group.
\begin{proposition} \label{torsion}
Let $f_{\bar{\mathbf{i}}}(u)=\frac{Y_{\bar{\mathbf{i}}}(u)}{1+Y_{\bar{\mathbf{i}}}(u)}\in \mathbb{Q}(y)$. Then
$$
\sum_{(\bar{\mathbf{i}},u)\in \bar{S}_{+}} d_{\bar{\mathbf{i}}}(u) D(f_{\bar{\mathbf{i}}}(u)|_{\mathbf{x}})=0 
$$
for any $\mathbf{x}=(x_{\bar{\mathbf{i}}})\in \mathbb{C}^{n}$ where $n=rr'$. 
\end{proposition}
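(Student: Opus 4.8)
The plan is to combine the wedge-product vanishing of Theorem~\ref{thmconst2} with the rigidity statement of Proposition~\ref{thmgon}, and then to pin down the resulting constant by a limit computation based on the monomial structure of Theorem~\ref{thmsgn2}. First I would record the elementary identity in $\Lambda^2 \mathbb{C}(y)^{*}$ that converts the Y-system data into the shape demanded by Proposition~\ref{thmgon}. Writing $Y=Y_{\bar{\mathbf{i}}}(u)$, since $f_{\bar{\mathbf{i}}}(u)=Y/(1+Y)$ we have $1-f_{\bar{\mathbf{i}}}(u)=1/(1+Y)$, and a direct computation using $a\wedge a=0$ and $a^{-1}\wedge b=-a\wedge b$ gives
\begin{equation*}
(1-f_{\bar{\mathbf{i}}}(u))\wedge f_{\bar{\mathbf{i}}}(u)=\frac{1}{1+Y}\wedge\frac{Y}{1+Y}=Y\wedge(1+Y)=Y_{\bar{\mathbf{i}}}(u)\wedge(1+Y_{\bar{\mathbf{i}}}(u)).
\end{equation*}
Summing against the positive integer weights $d_{\bar{\mathbf{i}}}(u)$ over $\bar{S}_{+}$ and using $\mathbb{Q}(y)\subseteq\mathbb{C}(y)$, Theorem~\ref{thmconst2} says precisely that $\sum_{(\bar{\mathbf{i}},u)\in\bar{S}_{+}}d_{\bar{\mathbf{i}}}(u)\,(1-f_{\bar{\mathbf{i}}}(u))\wedge f_{\bar{\mathbf{i}}}(u)=0$ in $\Lambda^2\mathbb{C}(y)^{*}$.

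Next I would invoke Proposition~\ref{thmgon}. Taking $S$ to be the multiset in which each $f_{\bar{\mathbf{i}}}(u)$ occurs $d_{\bar{\mathbf{i}}}(u)$ times (legitimate since the $d_{\bar{\mathbf{i}}}(u)$ are positive integers), the hypothesis of the proposition is exactly the vanishing just established, so the function
\begin{equation*}
\Phi(\mathbf{x})=\sum_{(\bar{\mathbf{i}},u)\in\bar{S}_{+}}d_{\bar{\mathbf{i}}}(u)\,D(f_{\bar{\mathbf{i}}}(u)|_{\mathbf{x}})
\end{equation*}
is independent of $\mathbf{x}\in\mathbb{C}^{n}$. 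It therefore suffices to evaluate this constant at the single point $\mathbf{x}=(0,\dots,0)$. Here I would use Theorem~\ref{thmsgn2}: by the factorization $Y_{\bar{\mathbf{i}}}(u)=G_{\bar{\mathbf{i}}}(u)\,T_{\bar{\mathbf{i}}}(u)$ with $G_{\bar{\mathbf{i}}}(u)|_{(0,\dots,0)}=1$ and $T_{\bar{\mathbf{i}}}(u)$ a nontrivial monomial, the regular nonvanishing factor $G$ contributes the value $1$ at the origin, so the limit of $Y_{\bar{\mathbf{i}}}(u)$ there is governed by the monomial $T_{\bar{\mathbf{i}}}(u)$ alone: it is $0$ when $T_{\bar{\mathbf{i}}}(u)$ is a positive monomial and $\infty$ when it is negative. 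Correspondingly $f_{\bar{\mathbf{i}}}(u)=Y/(1+Y)$ tends to $0$ or to $1$, and since the Bloch--Wigner function extends continuously to $\mathbb{P}^{1}(\mathbb{C})$ with $D(0)=D(1)=D(\infty)=0$, every summand vanishes in the limit, giving $\Phi(0,\dots,0)=0$.

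The main obstacle is the passage to the limit in this last step: the individual terms $D(f_{\bar{\mathbf{i}}}(u)|_{\mathbf{x}})$ need not be defined at $\mathbf{x}=(0,\dots,0)$ as honest complex numbers, so I must argue that $\Phi$, which is constant on the dense open set where all the rational functions are finite, extends continuously to the origin with value $0$. The clean point is that because $G_{\bar{\mathbf{i}}}(u)$ is regular and \emph{equal} to $1$ at the origin (rather than merely bounded or nonzero in some limiting sense), $Y_{\bar{\mathbf{i}}}(u)$ has no indeterminacy there and tends unambiguously to $0$ or $\infty$ along any approach to the origin. Continuity of $D$ on $\mathbb{P}^{1}(\mathbb{C})$ then forces each summand, and hence the constant value of $\Phi$, to equal $0$, which is the assertion of the proposition.
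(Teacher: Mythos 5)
Your proposal is correct and follows essentially the same route as the paper: reduce the weighted wedge condition to the constancy identity of Theorem~\ref{thmconst2}, apply Proposition~\ref{thmgon}, and evaluate at the origin where Theorem~\ref{thmsgn2} forces each $f_{\bar{\mathbf{i}}}(u)$ to $0$ or $1$, killing every Bloch--Wigner term. Your added discussion of continuity on $\mathbb{P}^{1}(\mathbb{C})$ at the origin is a careful touch the paper glosses over, but it does not change the argument.
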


\begin{proof}
To employ Proposition \ref{thmgon}, we check the following condition
\begin{equation} \label{cond1}
\sum_{(\bar{\mathbf{i}},u)\in \bar{S}_{+}} d_{\bar{\mathbf{i}}}(u) \left(f_{\bar{\mathbf{i}}}(u)\wedge (1-f_{\bar{\mathbf{i}}}(u))\right)=0.
\end{equation}
This is equivalent to 
$$\sum_{(\bar{\mathbf{i}},u)\in \bar{S}_{+}} d_{\bar{\mathbf{i}}}(u) \left((\frac{Y_{\bar{\mathbf{i}}}(u)}{1+Y_{\bar{\mathbf{i}}}(u)})\wedge(\frac{1}{1+Y_{\bar{\mathbf{i}}}(u)})\right)=0,$$
which reduces to the constancy condition of the $Y$-system,
$$\sum_{(\bar{\mathbf{i}},u)\in \bar{S}_{+}} d_{\bar{\mathbf{i}}}(u) \left(Y_{\bar{\mathbf{i}}}(u)\wedge (1+Y_{\bar{\mathbf{i}}}(u))\right)=0.$$
Thus (\ref{cond1}) is satisfied by Theorem \ref{thmconst2}.

Now all we have to check is that there is a point $
\mathbf{a}\in \mathbb{C}^n$  such that $\sum_{(\bar{\mathbf{i}},u)\in \bar{S}_{+}}D(f_{\bar{\mathbf{i}}}(u)|_{\mathbf{a}})=0$.
By Theorem \ref{thmsgn2}, $f_{\bar{\mathbf{i}}}(u)=\frac{G_{\bar{\mathbf{i}}}(u)T_{\bar{\mathbf{i}}}(u)}{1+G_{\bar{\mathbf{i}}}(u)T_{\bar{\mathbf{i}}}(u)}$. 
Since $G_{\bar{\mathbf{i}}}(u)|_{(0,\cdots,0)}=1$ and $T_{\bar{\mathbf{i}}}(u)\neq 1$ is a positive or negative monomial in $y_{\bar{\mathbf{i}}}$'s, $f_{\bar{\mathbf{i}}}(u)|_{(0,\cdots,0)}$ is always 0 or 1 depending on whether $T_{\bar{\mathbf{i}}}(u)$ is positive or negative. We simply choose $\mathbf{a}=(0,\cdots,0)$ and get $\sum_{(\bar{\mathbf{i}},u)\in \bar{S}_{+}}D(f_{\bar{\mathbf{i}}}(u)|_{\mathbf{a}})=0$. Therefore $\sum_{(\bar{\mathbf{i}},u)\in \bar{S}_{+}}D(f_{\bar{\mathbf{i}}}(u)|_{\mathbf{x}})=0$ for any  $\mathbf{x}=(x_{\bar{\mathbf{i}}})\in \mathbb{C}^n$ by Proposition \ref{thmgon}. 
\end{proof}

\begin{remark}
This proposition generalizes \cite[Theorem 2]{FS2} and \cite[Corollary 6.14]{FG}. If we count how many of $f_{\mathbf{i}}(u)|_{(0,\cdots,0)}$ becomes 1 in the above argument, we can obtain a proof of the dilogarithm identities for central charges of certain conformal field theories \cite{Nakanishi}. 
\end{remark}

\begin{corollary} \label{torsion2}
Let $(\bar{X},\bar{X'})$ be a pair of $ADET$ diagrams. If $\mathbf{y}=(y_{\bar{\mathbf{i}}})$ is a solution of the constant $Y$-system $\mathbb{Y}_{c}(\bar{X},\bar{X'})$, then
$$
\sum_{\bar{\mathbf{i}}\in \bar{\mathbf{I}}} [\frac{y_{\bar{\mathbf{i}}}}{1+y_{\bar{\mathbf{i}}}}] \in \mathcal{B}(F)
$$
is a torsion element of the Bloch group $\mathcal{B}(F)$ where $F$ is the number field generated by $\mathbf{y}$.
\end{corollary}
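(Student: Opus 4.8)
The plan is to deduce the corollary from Proposition \ref{torsion} by specializing the rational dilogarithm identity proved there to the given constant solution, and then to vary over all complex embeddings of $F$ and invoke the torsion criterion recalled in Section \ref{BG} (namely, that $D(\sigma(\xi))=0$ for every embedding $\sigma$ forces $\xi$ to be torsion). The only real work lies in checking that, at a constant solution, the weighted sum of Proposition \ref{torsion} collapses into a \emph{uniform} multiple of $\sum_{\bar{\mathbf{i}}} D(y_{\bar{\mathbf{i}}}/(1+y_{\bar{\mathbf{i}}}))$.

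First I would fix an embedding $\sigma : F \hookrightarrow \mathbb{C}$ and note that $(\sigma(y_{\bar{\mathbf{i}}}))_{\bar{\mathbf{i}}}$ is again a solution of the constant Y-system, now over $\mathbb{C}$. Let $\mathbf{x}_\sigma \in \mathbb{C}^n$ be the point of initial data that this constant solution determines, i.e. $x_{\bar{\mathbf{i}}} = \sigma(y_{\bar{\mathbf{i}}})$ for $\bar{\mathbf{i}} \in \bar{\mathbf{I}}_+$ and $x_{\bar{\mathbf{i}}} = \sigma(y_{\bar{\mathbf{i}}})^{-1}$ for $\bar{\mathbf{i}} \in \bar{\mathbf{I}}_-$, in accordance with the conventions $Y_{\bar{\mathbf{i}}}(0)=y_{\bar{\mathbf{i}}}$ and $Y_{\bar{\mathbf{i}}}(-1)=y_{\bar{\mathbf{i}}}^{-1}$. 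Evaluating the rational functions $Y_{\bar{\mathbf{i}}}(u)\in\mathbb{Q}(y)$ at $\mathbf{x}_\sigma$ reconstructs this constant solution, so $Y_{\bar{\mathbf{i}}}(u)|_{\mathbf{x}_\sigma}=\sigma(y_{\bar{\mathbf{i}}})$ for every $u$, and hence
\begin{equation*}
f_{\bar{\mathbf{i}}}(u)|_{\mathbf{x}_\sigma} = \frac{\sigma(y_{\bar{\mathbf{i}}})}{1+\sigma(y_{\bar{\mathbf{i}}})}
\end{equation*}
is independent of $u$. This $u$-independence is exactly what the choice of a constant solution buys us.

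Substituting this into Proposition \ref{torsion}, the summand depends only on $\bar{\mathbf{i}}$, so the sum collapses:
\begin{equation*}
0 = \sum_{(\bar{\mathbf{i}},u)\in \bar{S}_{+}} d_{\bar{\mathbf{i}}}(u)\, D\bigl(f_{\bar{\mathbf{i}}}(u)|_{\mathbf{x}_\sigma}\bigr) = \sum_{\bar{\mathbf{i}}\in \bar{\mathbf{I}}} w_{\bar{\mathbf{i}}}\, D\!\left(\frac{\sigma(y_{\bar{\mathbf{i}}})}{1+\sigma(y_{\bar{\mathbf{i}}})}\right), \qquad w_{\bar{\mathbf{i}}} := \sum_{u : (\bar{\mathbf{i}},u)\in \bar{S}_{+}} d_{\bar{\mathbf{i}}}(u).
\end{equation*}
The crux is to show $w_{\bar{\mathbf{i}}}$ is a nonzero constant $N$ independent of $\bar{\mathbf{i}}$. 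Since the quotient map is the identity on the $u$-coordinate, $d_{\bar{\mathbf{i}}}(u)$ equals the size of the $G$-orbit of a lift of $\bar{\mathbf{i}}$ and does not depend on $u$; I would observe that in every $ADET$ pair these orbit sizes are uniform, because the only nontrivial folding occurring is the \emph{free} action $A_{2r}\to T_r$ (all orbits of size $2$), so the product action on $\bar{\mathbf{I}}$ has constant orbit size. For the count of admissible $u$, a parity argument over the window $0\le u\le 2(h+h')-1$ shows each fixed $\bar{\mathbf{i}}$ contributes $h+h'$ values in the bipartite case and all $2(h+h')$ values when a tadpole is present. Multiplying the two uniform factors gives $w_{\bar{\mathbf{i}}}=N>0$ for all $\bar{\mathbf{i}}$.

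Dividing by $N$ yields $\sum_{\bar{\mathbf{i}}} D\bigl(\sigma(y_{\bar{\mathbf{i}}})/(1+\sigma(y_{\bar{\mathbf{i}}}))\bigr)=0$, that is, $D(\sigma(\xi))=0$ for $\xi=\sum_{\bar{\mathbf{i}}}[y_{\bar{\mathbf{i}}}/(1+y_{\bar{\mathbf{i}}})]$; the membership $\xi\in\mathcal{B}(F)$ itself follows from the same specialization of Theorem \ref{thmconst2} in $\Lambda^2 F^{*}$. Since $\sigma$ was arbitrary, the torsion criterion of Section \ref{BG} gives that $\xi$ is torsion in $\mathcal{B}(F)$. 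I expect the constancy of $w_{\bar{\mathbf{i}}}$ to be the main obstacle: the rest is a direct specialization of Proposition \ref{torsion}, but this step genuinely uses that the $ADET$ foldings act freely, so that orbit sizes, and together with the parity count the whole weight, do not vary with $\bar{\mathbf{i}}$.
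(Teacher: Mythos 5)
Your proposal is correct and takes essentially the same route as the paper: specialize Proposition \ref{torsion} at the point $\mathbf{x}_\sigma$ encoding the constant solution (with the inverse convention on $\bar{\mathbf{I}}_-$), collapse the sum over $\bar{S}_+$ using that the total weight $w_{\bar{\mathbf{i}}}$ is a positive constant independent of $\bar{\mathbf{i}}$, and conclude by applying the torsion criterion of Section \ref{BG} to every embedding $\sigma$. The only divergence is bookkeeping: you read $d_{\bar{\mathbf{i}}}(u)$ literally as the $G$-orbit size (so $2$ for a single tadpole), while the paper's proof counts only lifts lying in $P_+$ (so $d_{\bar{\mathbf{i}}}(u)=1$ there and $2$ only for $(T_r,T_{r'})$) --- a uniform-factor discrepancy that is harmless since only constancy and positivity of $w_{\bar{\mathbf{i}}}$ enter, and you are in fact more explicit than the paper about the evaluation point, the $u$-independence of the specialized $Y$'s, and the membership $\xi\in\mathcal{B}(F)$.
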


\begin{proof}
Let $\sigma : F\hookrightarrow \mathbb{C}$ be an embedding. 
By Proposition \ref{torsion}, we know
$$
\sum_{(\mathbf{i},u)\in \bar{S}_{+}}d_{\bar{\mathbf{i}}}(u) D\left(\sigma(\frac{y_{\bar{\mathbf{i}}}}{1+y_{\bar{\mathbf{i}}}})\right)=0.
$$

Note that when $(\bar{X},\bar{X'})$ is given by a pair of $ADET$ diagrams, $d_{\bar{\mathbf{i}}}(u)$ is the same for all $(\bar{i},u)$.  For $(\bar{X},\bar{X'})=(T_r,T_{r'})$, we have $d_{\bar{\mathbf{i}}}(u)=2$ and $d_{\bar{\mathbf{i}}}(u)=1$ in other cases. 

Thus we get
$$
\sum_{\bar{\mathbf{i}}\in \bar{\mathbf{I}}} D\left(\sigma(\frac{y_{\bar{\mathbf{i}}}}{1+y_{\bar{\mathbf{i}}}})\right)=0.
$$

Since this is true for any $\sigma : F\hookrightarrow \mathbb{C}$, $\sum_{\bar{\mathbf{i}}\in \bar{\mathbf{I}}} [\frac{y_{\bar{\mathbf{i}}}}{1+y_{\bar{\mathbf{i}}}}]$ is a torsion element of the Bloch group.
\end{proof}

\section{Proof of the main theorem}\label{PF}
We are now ready to prove Theorem \ref{thmmain}. Let $(X,X')$ be a pair of $ADET$ diagrams and $A=\mathcal{C}(X)\otimes \mathcal{C}(X')^{-1}$. First we relate a solution of the equation $\mathbf{x}=(1-\mathbf{x})^A$ to the constant $Y$-system $\mathbb{Y}_{c}(X,X')$. This will show that the constant $Y$-system $\mathbb{Y}_{c}(X,X')$ is just a disguised form of the equation $\mathbf{x}=(1-\mathbf{x})^A$.

\begin{proposition} \label{PROPY}
If $\mathbf{x}=(x_{\mathbf{i}})$ is a solution to $\mathbf{x}=(1-\mathbf{x})^A$ in a number field, then $\mathbf{y}=(y_{\mathbf{i}})$ where $y_{\mathbf{i}}=\frac{x_{\mathbf{i}}}{1-x_{\mathbf{i}}}$ for each $\mathbf{i}\in \mathbf{I}$ is a solution to the constant $Y$-system $\mathbb{Y}_c(X,X')$.
\end{proposition}

\begin{proof}
Let us rewrite the equation $\mathbf{x}=(1-\mathbf{x})^A$ as
\begin{equation}  \label{eq41}
x_{\mathbf{i}}=\prod _{\mathbf{j}\in\mathbf{I}} (1- x_{\mathbf{j}})^{(\mathcal{C}(X)\otimes \mathcal{C}(X')^{-1})_{\mathbf{ij}}},
\end{equation}
or,
$$
x_{ii'}=\prod _{(j,j')\in I\times I'} (1-x_{jj'})^{(\mathcal{C}(X)\otimes \mathcal{C}(X')^{-1})_{\mathbf{ij}}}.
$$
This implies
\begin{equation}\label{4:XtoXpair}
\prod _{j'\in I'} x_{ij'}^{\mathcal{C}(X')_{i'j'}}=\prod _{j\in I} (1-x_{ji'})^{\mathcal{C}(X)_{ij}}.
\end{equation}

Since $A$ is a positive definite matrix, all diagonal entries are positive. Thus from (\ref{eq41}), we can see that $x_{\mathbf{i}}$ is neither 0 nor 1. 

Now use the change of variables $y_{\mathbf{i}}=\frac{x_{\mathbf{i}}}{1-x_{\mathbf{i}}}$ or $x_{\mathbf{i}}=\frac{y_{\mathbf{i}}}{1+y_{\mathbf{i}}}=\frac{1}{1+y_{\mathbf{i}}^{-1}}$.
From (\ref{4:XtoXpair}), we obtain
$$
\prod _{j'\in I'} (\frac{1}{1+y_{ij'}^{-1}})^{\mathcal{C}(X')_{i'j'}}=\prod _{j\in I} (\frac{1}{1+y_{ji'}})^{\mathcal{C}(X)_{ij}},
$$
and thus get
$$
1= \frac{\prod _{j\in I}({1+y_{ji'}})^{-\mathcal{C}(X)_{ij}}}{\prod _{j'\in I'} ({1+y_{ij'}^{-1}})^{-\mathcal{C}(X')_{i'j'}}}.
$$
This can be written as
$$
\left(\frac{1}{1+y_{ii'}^{-1}}\right)^2(1+y_{ii'})^2=\frac{\prod _{j\in I} (1+y_{ji'})^{\mathcal{I}(X)_{ij}}}{\prod _{j'\in I'} \left(1+y_{ij'}^{-1}\right)^{\mathcal{I}(X')_{i'j'}}}.
$$
We thus have obtained the constant $Y$-system
$$
y_{ii'}^2=\frac{\prod_{j\in I} (1+y_{ji'})^{\mathcal{I}(X)_{ij}}}{\prod_{j'\in I'} \left(1+y_{ij'}^{-1}\right)^{\mathcal{I}(X')_{i'j'}}}.
$$
\end{proof}

Now we can finish the proof of Theorem \ref{thmmain}. 
\begin{proof}
Let $\mathbf{x}=(x_{\mathbf{i}})$ be a solution to $\mathbf{x}=(1-\mathbf{x})^A$. 
By Proposition \ref{PROPY}, $y_{\mathbf{i}}=\frac{x_{\mathbf{i}}}{1-x_{\mathbf{i}}}$ is a solution to the constant $Y$-system  $\mathbb{Y}_{c}(X,X')$. Then our theorem follows from Corollary \ref{torsion2}.
\end{proof}

\section*{Acknowledgements}
This is part of the author's Ph.D. thesis, written under the supervision of Richard Borcherds at the University of California, Berkeley. He wishes to thank An Huang and Richard Borcherds for helpful discussions, Nicolai Reshetikhin for a suggestion to learn cluster algebras, Edward Frenkel for an explanation of his works and Don Zagier for valuable comments on earlier drafts. This work is partially supported by Samsung Scholarship.
\bibliographystyle{amsalpha}
\bibliography{NahmYmain}

\providecommand{\bysame}{\leavevmode\hbox to3em{\hrulefill}\thinspace}
\providecommand{\MR}{\relax\ifhmode\unskip\space\fi MR }
\providecommand{\MRhref}[2]{%
  \href{http://www.ams.org/mathscinet-getitem?mr=#1}{#2}
}
\providecommand{\href}[2]{#2}
\begin{thebibliography}{RVT93}

\bibitem[FG09]{FG}
V.~V. Fock and A.~B. Goncharov, \emph{Cluster ensembles, quantization and the
  dilogarithm}, Ann. Sci. \'Ec. Norm. Sup\'er. (4) \textbf{42} (2009), no.~6,
  865--930.

\bibitem[FS95a]{FS1}
E.~Frenkel and A.~Szenes, \emph{Crystal bases, dilogarithm identities and
  torsion in algebraic {$K$}-theory}, J. Amer. Math. Soc. \textbf{8} (1995),
  no.~3, 629--664.

\bibitem[FS95b]{FS2}
\bysame, \emph{Thermodynamic {B}ethe ansatz and dilogarithm identities. {I}},
  Math. Res. Lett. \textbf{2} (1995), no.~6, 677--693.

\bibitem[FZ03]{FZ}
S.~Fomin and A.~Zelevinsky, \emph{{$Y$}-systems and generalized associahedra},
  Ann. of Math. (2) \textbf{158} (2003), no.~3, 977--1018.

\bibitem[GT95]{MR1325407}
F.~Gliozzi and R.~Tateo, \emph{A{DE} functional dilogarithm identities and
  integrable models}, Phys. Lett. B \textbf{348} (1995), no.~1-2, 84--88.

\bibitem[Kel13]{Keller}
B.~Keller, \emph{The periodicity conjecture for pairs of {D}ynkin diagrams},
  Ann. of Math. (2) \textbf{177} (2013), no.~1, 111--170.

\bibitem[KN11]{MR2869652}
S.~Keegan and W.~Nahm, \emph{Nahm's conjecture and coset models: a systematic
  search for matching parameters}, J. Phys. A \textbf{44} (2011), no.~50,
  505204.

\bibitem[Nah07]{Nahm}
W.~Nahm, \emph{Conformal field theory and torsion elements of the {B}loch
  group}, Frontiers in number theory, physics, and geometry. {II}, Springer,
  Berlin, 2007, pp.~67--132.

\bibitem[Nak11]{Nakanishi}
T.~Nakanishi, \emph{Dilogarithm identities for conformal field theories and
  cluster algebras: simply laced case}, Nagoya Math. J. \textbf{202} (2011),
  23--43.

\bibitem[RVT93]{MR1216231}
F.~Ravanini, A.~Valleriani, and R.~Tateo, \emph{Dynkin {TBA}s}, Internat. J.
  Modern Phys. A \textbf{8} (1993), no.~10, 1707--1727.

\bibitem[VZ11]{ZV}
M.~Vlasenko and S.~Zwegers, \emph{Nahm's conjecture: asymptotic computations
  and counterexamples}, Commun. Number Theory Phys. \textbf{5} (2011), no.~3,
  617--642.

\bibitem[Woj96]{MR1399471}
Z.~Wojtkowiak, \emph{Functional equations of iterated integrals with regular
  singularities}, Nagoya Math. J. \textbf{142} (1996), 145--159.

\bibitem[Zag91]{Zagier2}
D.~Zagier, \emph{Polylogarithms, {D}edekind zeta functions and the algebraic
  {$K$}-theory of fields}, Arithmetic algebraic geometry ({T}exel, 1989),
  Progr. Math., vol.~89, Birkh\"auser Boston, Boston, MA, 1991, pp.~391--430.

\bibitem[Zag07]{Zagier}
\bysame, \emph{The dilogarithm function}, Frontiers in number theory, physics,
  and geometry. {II}, Springer, Berlin, 2007, pp.~3--65.

\bibitem[Zam91]{Zam}
Al.~B. Zamolodchikov, \emph{On the thermodynamic {B}ethe ansatz equations for
  reflectionless {$ADE$} scattering theories}, Phys. Lett. B \textbf{253}
  (1991), no.~3-4, 391--394.

\end{thebibliography}
\end{document}